\theoremstyle{definition}
\newtheorem*{acknowledgements}{Acknowledgements}
\numberwithin{equation}{section}
\newcommand\vanish[1]{}	
\newcommand\ourcomment[1]{ \textbf{[#1]} }
\newcommand\oc\ourcomment
\newtheorem{theorem}{Theorem}[section]
\newtheorem{conjecture}{Conjecture}[section]
\newtheorem{corollary}{Corollary}[section]
\newtheorem{problem}{Problem}[section]
\title{Cop number of graphs without long holes}
\author{Vaidy Sivaraman}
\begin{document}
\maketitle
\begin{abstract}
A hole in a graph is an induced cycle of length at least $4$. We give a simple winning strategy for $t-3$ cops to capture a robber in the game of cops and robbers played in a graph that does not contain a hole of length at least $t$. This strengthens a theorem of Joret-Kaminski-Theis, who proved that $t-2$ cops have a winning strategy in such graphs. As a consequence of our bound, we also give an inequality relating the cop number and the Dilworth number of a graph. 
\end{abstract}

\maketitle

 The game of cops and robbers is played on a finite simple graph $G$. There are $k$ cops and a single robber. Each of the cops chooses a vertex to start, and then the robber chooses a vertex. And then they alternate turns starting with the cop. In the turn of cops, each cop either stays on the vertex or moves to a neighboring vertex. In the robber's turn, he stays on the same vertex or moves to a neighboring vertex. The cops win if at any point in time, one of the cops lands on the robber. The robber wins if he can avoid being captured. The cop number of $G$, denoted $c(G)$, is the minimum number of cops needed so that the cops have a winning strategy in $G$.  The question of what makes a graph to have high cop number is not clearly understood. Some fundamental results were proved in \cite{AF} and \cite{NW}. The book by Bonato and Nowakowski \cite{BN} is a fantastic source of information on the game of cops and robbers. (A quick primer on the cop number is \cite{AB}.) \\

All graphs in this article are finite, simple, and connected. For graphs $H,G$ we say that $G$ is $H$-free if $G$ does not contain $H$ as an induced subgraph. For a vertex $v$, the closed neighborhood of $v$, denoted by $N[v]$, is the set of all neighbors of $v$, including $v$ itself. For a positive integer $t$, $P_t$ will denote the path graph on $t$ vertices. A hole in $G$ is an induced cycle of length at least $4$. A vertex set in a graph is said to be connected if the subgraph induced on it is connected. The Gy\'{a}rf\'{a}s path argument was first used by Gy\'{a}rf\'{a}s \cite{AG} to prove that $P_t$-free graphs are ``$\chi$-bounded" but the theorem (originally a conjecture of Gy\'{a}rf\'{a}s) that the class of graphs without long holes is ``$\chi$-bounded" was only recently proved and needed novel ideas in addition to the Gy\'{a}rf\'{a}s path argument (see \cite{SS}).  The Gy\'{a}rf\'{a}s path argument has become a standard proof technique in graph coloring (see \cite{SS}). The purpose of this note is to show that the structure of the Gy\'{a}rf\'{a}s path can be exploited to give a conceptually simple winning strategy for the cops in a graph with certain induced subgraphs forbidden. Joret, Kaminski, and Theis \cite{JKT} proved that $t-2$ cops can capture a robber in a graph without holes of length at least $t$. They claim that an argument similar to the their proof for $P_t$-free graphs (whose proof proceeds in rounds and uses induction, and some essential details are missing) works but no details are given. We prove a strengthening of their theorem. Our proof also has the advantage of being conceptually simple, with the robber being captured very quickly, and for readers familiar with the Gy\'{a}rf\'{a}s path argument, can be summarized compactly as ``Place the $t-3$ cops on a Gy\'{a}rf\'{a}s path, and slide the cops until the robber is cornered".

  \begin{theorem}\label{MAINTHEOREM}
  Let $G$ be a graph without holes of length at least $t$ ($t \geq 4$). Then $t-3$ cops can capture the robber. 
  \end{theorem}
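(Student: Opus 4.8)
The plan is to implement literally the one-line summary: build a Gy\'{a}rf\'{a}s path, station the $t-3$ cops on a window of $t-3$ consecutive vertices of it, and push that window forward until the robber is trapped. I would construct the induced path $v_1,v_2,\dots$ adaptively against the robber: given $v_1,\dots,v_k$, let $C_k$ be the component of $G-N[\{v_1,\dots,v_k\}]$ containing the robber and pick $v_{k+1}$ to be a neighbor of $v_k$ having a neighbor in $C_k$ (possible since $G$ is connected and $C_k$ is a component of the complement of the path's closed neighborhood). This is exactly the Gy\'{a}rf\'{a}s path: it is induced, it always heads toward the robber, and the components $C_1\supseteq C_2\supseteq\cdots$ shrink. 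The cops occupy the $t-3$ most recent vertices $v_{k-t+4},\dots,v_k$, having first fanned out from $v_1$ as the initial segment is built.

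The first thing to observe is that while the window is held fixed the robber is automatically confined: its region $R$ is a component of $G-N[\{\text{window}\}]$, so every vertex through which $R$ touches the window lies in the closed neighborhood of an occupied vertex and is therefore forbidden to the robber. Thus the robber can only move inside $R$, and danger arises solely at the instant of a \emph{slide}, when every cop advances one step along the path (legal because consecutive path vertices are adjacent), turning the window into $v_{k-t+5},\dots,v_{k+1}$. The new front cop at $v_{k+1}$ was chosen adjacent to $R$, so it strictly eats into the robber's territory, while---this is the content of the invariant I must maintain---vacating the rear vertex $v_{k-t+4}$ never opens a route for the robber to get behind the window.

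The heart of the proof, and the step I expect to be the main obstacle, is showing that a slide preserves confinement, together with the accounting that makes $t-3$ (rather than the $t-2$ of Joret--Kaminski--Theis) the right number; this is exactly where the absence of long holes is used. Suppose a slide let the robber reach a vertex $u$ adjacent to an early path vertex $v_a$ with $a\le k-t+3$, i.e.\ strictly behind the window. Since $u$ must be unguarded, $u$ is adjacent to none of the window vertices; in particular the front cop at $v_k$ still guards every ``shortcut'' vertex adjacent to $v_k$, so any route from the front of the region back to $v_a$ must genuinely detour through at least two vertices of $R$. Closing such a route against the induced subpath $v_a,v_{a+1},\dots,v_k$ produces a cycle: the subpath alone contributes at least $t-2$ vertices once $a\le k-t+3$, and the detour contributes at least two more, so the cycle has length at least $t$. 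Extracting a genuine hole from it contradicts the hypothesis, and it is precisely the front cop's guarding of the shortcut chords that saves the extra cop and yields $t-3$.

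It remains to extract a truly \emph{induced} cycle from the closed walk above---choosing the vertices that realize the two attachments and a shortest connecting path inside $R$ so that no chord to the intermediate path vertices shortens it below length $t$---and to argue termination: since $G$ is finite and each slide advances the front index by one while strictly shrinking the robber's reachable territory, the robber is cornered after finitely many moves. I expect the induced-cycle extraction and the bookkeeping that keeps every cop move legal while the window slides to be the only genuinely delicate parts; the rest is the standard Gy\'{a}rf\'{a}s path argument together with a finiteness argument.
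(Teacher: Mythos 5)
Your proposal is essentially the paper's proof: the same Gy\'{a}rf\'{a}s path grown toward the robber, the same window of $t-3$ cops sliding along it, and the same count (at least $t-2$ path vertices from the attachment point $v_a$ behind the window, plus the robber's old and new positions from the detour through his territory) forcing a hole of length at least $t$. One sentence needs correcting, though: the robber's region is a component of $G$ minus the closed neighborhood of the \emph{entire} path built so far, not of the current window alone. If it were the latter, confinement would be trivial, one cop would always suffice, and the hole hypothesis would never be used; the whole difficulty is that the robber might slip out through a vertex adjacent to an old, now-unguarded path vertex $v_a$, and this threat is present on every robber move, not only at the instant the rear cop vacates its vertex. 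Your escape analysis does treat exactly this case (and it is the component-of-the-whole-path structure that guarantees any escape vertex is adjacent to \emph{some} path vertex in the first place), so the slip is in the framing rather than in the argument. The induced-cycle extraction you defer is where the paper does its only real work: take $a$ \emph{largest} among the indices before the window to which the escape vertex attaches (killing chords from it to $v_{a+1},\dots,v_{k-1}$), and note that vertices of the robber's region have no neighbours among $v_a,\dots,v_{k-1}$ by the nestedness of the components, so a shortest connecting path inside the region closes up into a genuine hole. You name exactly these ingredients, so the plan is sound.
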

  
  \begin{proof}
  Let $v_0 \in V(G)$. In the first move player C (the cop player) places all the $t-3$ cops in $v_0$. One of the cops is going to be stationary at $v_0$ and the other $t-4$ cops will move in the next turn. The robber will choose a vertex in $V(G) \setminus N[v_0]$. Let $v_1$ be a neighbor of $v_0$ that has a neighbor in the component $C_1$ of $G - N[v_0]$ containing the robber vertex. Now player C moves $t-4$ of his cops from $v_0$ to $v_1$. Now the robber moves to some vertex (or stays put), and let $v_2$ be a neighbor of $v_1$ that has a neighbor in the component $C_2$ of $G[C_1] - N[v_1]$ containing the robber vertex. Player C moves $t-5$ of his cops from $v_1$ to $v_2$. This procedure is repeated so that at the end of $t-2$ moves, we have an induced path $v_0-v_1- \cdots -v_{t-4}$, a nested sequence of connected vertex sets $C_1 \supseteq C_2 \cdots \supseteq C_{t-4}$, where $v_i \not  \in C_i$ but $v_i$ has a neighbor in $C_i$. There is a cop on each of the vertices $v_0, \cdots , v_{t-4}$ and the robber is in some vertex in $C_{t-4}$. This finishes the first phase. \\
  
Now we start the second phase. In every turn, the Gy\'{a}rf\'{a}s path constructed in the first phase is going to be extended, and every cop is going to move one position to the right along the Gy\'{a}rf\'{a}s path that is being built. So after one move the cops will be in $v_1, \cdots , v_{t-3}$, the robber will be in some vertex in $C_{t-3}$, and after the second move (in this phase) the cops will be in $v_2, \cdots , v_{t-2}$, the robber will be in some vertex in $C_{t-2}$ and so on. After $k$ moves in the second phase, the cops are in $v_{k}, \cdots , v_{k+t-4}$, and the robber is in some vertex in $C_{k+t-4}$. The crucial point is that, as we are going to show, the robber cannot escape from $C_{k+t-4}$, meaning he cannot move to a vertex not in $C_{k+t-4}$. Suppose he does. Say he moves from $r$ to a vertex $r' \not \in C_{k+t-4}$. Clearly $r'$ is non-adjacent to each of $v_{k}, \cdots , v_{k+t-4}$, for, otherwise the robber will be caught in the next move. By virtue of the special structure of the Gy\'{a}rf\'{a}s path and its relation to the rest of the graph, $r'$ is adjacent to some vertex on the Gy\'{a}rf\'{a}s path $v_0 - \cdots - v_{k+t-4}$. (This is the crucial step that makes the proof work, and explains why the sets $C_i$ are defined in the way mentioned.) Let $i$ be the largest index with $i < k$ such that $r'$ is adjacent to $v_i$. Now $v_i - v_{i+1} - \cdots - v_{k+t-4} - P - r - r' - v_i$ is a hole of length at least $t$, a contradiction, where $P$ is a shortest path from  $v_{k+t-4}$ to  $r$ with internal vertices in  $C_{k+t-4}$. Hence after every move (of the cops and the robber), the length of the Gy\'{a}rf\'{a}s path being built increases. (The robber is forced to do this as he cannot go out of the component he is currently in.) But the length of a Gy\'{a}rf\'{a}s path in $G$ cannot be more than $|V(G)|$. The robber will be caught in at most $|V(G)|$ moves. This completes the proof. 
 \end{proof}

The above argument gives a very transparent reason as to why a single cop has a winning strategy in a chordal graph (the case $t=4$). (Recall that a graph is chordal if it has no holes.) The proof presented here is similar to the one in \cite{VS} but has a second part where the cops move. This is necessary since the length of an induced path in a graph without a hole of length at least $t$ need not be bounded by a function of $t$. The path constructed in the proof is called a Gy\'{a}rf\'{a}s path, but in graph coloring, the component is chosen to be the one with the largest chromatic number rather than the component containing the robber. Analysis of the above proof inspires one to define the following. For each vertex $v$, let $z(v)$ denote the length of a longest induced path starting at $v$. And define $z(G)$ to be the minimum, taken over $v \in V(G)$, of $z(v)$. By choosing $v_0$ to be the vertex with minimum $z$ value in the proof above, we see that if $G$ is a graph without holes of length at least $t$ ($t \geq 4$) $t-3$ cops can capture the robber in at most $z(G)$ moves. This has the following simple corollary. (The bound on the number of cops was proved in \cite{JKT}, and the bound on the number of moves was proved in \cite{VS}.)
  
  \begin{corollary}
  Let $G$ be a $P_t$-free graph. Then $t-2$ cops can capture the robber  in at most $t-1$ moves. 
  \end{corollary}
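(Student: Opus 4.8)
The plan is to deduce the corollary from the strengthened form of Theorem~\ref{MAINTHEOREM} (the version giving the bound of $z(G)$ moves) by an appropriate shift of the parameter. The key observation is that $P_t$-freeness does not forbid holes of length exactly $t$, but it does forbid every longer hole: if $G$ contained a hole of length $\ell \ge t+1$, then deleting any single vertex of that induced cycle would leave an induced path on $\ell-1 \ge t$ vertices, producing an induced $P_t$ and contradicting $P_t$-freeness. Hence a $P_t$-free graph has no hole of length at least $t+1$.

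First I would apply the sharpened Theorem~\ref{MAINTHEOREM} with the parameter $t+1$ in place of $t$. Since $G$ has no hole of length at least $t+1$, this yields a winning strategy for $(t+1)-3 = t-2$ cops, and moreover the robber is captured in at most $z(G)$ moves. This already gives the claimed number of cops, so it remains only to bound $z(G)$.

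For the move count, I would note that $P_t$-freeness directly bounds the length of every induced path: no induced path of $G$ can have $t$ vertices, so every induced path of $G$, in particular a longest one starting at any chosen vertex $v$, has at most $t-1$ vertices. Thus $z(v) \le t-1$ for every $v$, whence $z(G) \le t-1$. Combining this with the previous paragraph, $t-2$ cops capture the robber in at most $t-1$ moves, as claimed.

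The argument is essentially routine once the parameter shift is in place; the only genuine subtlety, and the step I would be most careful about, is the off-by-one. One must resist applying Theorem~\ref{MAINTHEOREM} directly with parameter $t$, which forbids holes of length at least $t$ and is a strictly stronger hypothesis than $P_t$-freeness, and instead recognize that the correct correspondence is between $P_t$-free graphs and graphs with no hole of length at least $t+1$. Getting this bookkeeping right is exactly what makes the final bounds $t-2$ and $t-1$ come out as stated.
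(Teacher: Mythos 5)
Your proposal is correct and follows essentially the same route as the paper: observe that $P_t$-freeness forbids holes of length at least $t+1$, apply the strengthened form of Theorem~\ref{MAINTHEOREM} (with the $z(G)$ move bound) at parameter $t+1$ to get $t-2$ cops, and bound $z(G)\le t-1$ from the absence of induced paths on $t$ vertices. The paper's proof is a one-line version of exactly this argument, including the same parameter shift you highlight.
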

  
  \begin{proof}
  Follows from the theorem and the obvious fact that since $G$ is $P_t$-free, it does not contain a hole of length at least $t+1$ and $z(G) \leq t-1$. 
  \end{proof}

The simplicity of the argument presented above immediately suggests: Can we do better if we understand more about the structure of graphs not containing a hole of length at least $t$? Here is a conjecture.
  
   \begin{conjecture}
   Let $G$ be a graph without a hole of length at least $t$ ($t \geq 6$). Then $t-4$ cops can capture the robber.
  \end{conjecture}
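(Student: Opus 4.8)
The plan is to run the sliding strategy of Theorem~\ref{MAINTHEOREM} with only $t-4$ cops and then to work harder at the single place where the hole-length count falls one short. So I would place the $t-4$ cops on a Gy\'{a}rf\'{a}s path and slide, so that after $k$ moves of the second phase they occupy $v_k,\dots,v_{k+t-5}$ and the robber sits in $C_{k+t-5}$. The escape analysis then goes through verbatim: if the robber moves from $r\in C_{k+t-5}$ to $r'\notin C_{k+t-5}$, the nested-component structure forces $r'$ to be adjacent to some path vertex $v_i$, and evasion (non-adjacency to all current cops) forces $i\le k-1$. Closing up a shortest connection from $v_{k+t-5}$ to $r'$ through $C_{k+t-5}$ at $v_i$ produces an induced cycle; since $r'$ is non-adjacent to $v_{k+t-5}$ this connection has at least one internal vertex, so the cycle has length at least $(k+t-4-i)+2\ge t-1$.

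With one fewer cop this lower bound is only $t-1$ rather than $t$, so it is \emph{not} a contradiction, and the robber can in principle escape --- but \textbf{only in a completely rigid configuration}. Equality $t-1$ forces $i=k-1$ and forces the shortest connection to have a single internal vertex $w\in C_{k+t-5}$ with $w\sim v_{k+t-5}$ and $w\sim r'$. Since $w\in C_k$ we have $w\not\sim v_{k-1}$, and by maximality of $i$ we have $r'\not\sim v_k,\dots,v_{k+t-5}$, so the escape creates a genuine induced $(t-1)$-cycle
\[
H \;=\; v_{k-1} - v_k - \cdots - v_{k+t-5} - w - r' - v_{k-1},
\]
with the robber slipping back to a vertex $r'$ adjacent to $v_{k-1}$, just behind the trailing cop $v_k$. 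Exactly as in Theorem~\ref{MAINTHEOREM}, if no such tight move is available the robber stays confined to $C_{k+t-5}$ and the path lengthens, so we may assume the robber always escapes by this one backward slip.

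The idea I would pursue against it is a \emph{pincer} on $H$. The $t-4$ cops occupy every vertex of $H$ except the arc $w - r' - v_{k-1}$, whose middle vertex holds the robber and whose two ends are one step from the leading cop ($v_{k+t-5}\to w$) and the trailing cop ($v_k\to v_{k-1}$). Making both moves puts cops on both $H$-neighbours of $r'$ and forces the robber off $H$ entirely. I would then try to show that this constitutes genuine progress: re-anchoring the Gy\'{a}rf\'{a}s path at $v_{k-1}$, to which the robber is now attached, the region available to it should be properly smaller, and the no-long-hole hypothesis applied to that region should prevent the nested components from shrinking forever.

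\textbf{The hard part} is precisely this progress claim. Punishing a single backward slip is easy locally, but one must rule out the robber oscillating --- slipping back, being pinned to a $(t-1)$-hole, escaping it, and re-creating the same tight configuration again and again --- \emph{without ever spending the cop we have saved relative to Theorem~\ref{MAINTHEOREM}}. Turning the local pincer into a global monovariant looks to need genuinely new structural input about graphs with no long hole --- a decomposition, or a bound forcing the induced $(t-1)$-cycles to exhaust the graph --- rather than the Gy\'{a}rf\'{a}s path by itself. As a first test, and a likely source of the right invariant, I would settle the smallest case $t=6$: two cops in a graph whose only holes are $C_4$ and $C_5$, where the tight configuration is an induced $C_5$ and the pincer-and-progress scheme should be easiest to verify.
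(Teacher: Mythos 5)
First, be aware that the statement you are trying to prove is presented in the paper as an open \emph{conjecture}: the paper contains no proof of it, and in fact records that even weaker special cases (the $P_5$-free and $2K_2$-free conjectures, and the case $t=6$ itself) remain open. So there is nothing in the paper to compare your argument against; it has to stand entirely on its own, and it does not --- as you yourself acknowledge.

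Your local analysis is correct: running the strategy of Theorem~\ref{MAINTHEOREM} with $t-4$ cops at $v_k,\dots,v_{k+t-5}$, an escape from $C_{k+t-5}$ to $r'$ produces an induced cycle of length at least $t-1$ rather than $t$, and the no-long-hole hypothesis pins any actual escape to the rigid configuration you describe ($i=k-1$, a single internal vertex $w$, an induced $(t-1)$-cycle $H$). Everything after that, however, is a plan rather than a proof, and the gap is exactly where you locate it. Concretely: (i) the pincer occupies only the two $H$-neighbours of $r'$, but the robber may leave through any neighbour of $r'$ outside $H$, and nothing you have said controls where it lands; ``re-anchoring the Gy\'{a}rf\'{a}s path at $v_{k-1}$'' restarts the construction in a region that need not be contained in, or even comparable to, anything the cops previously controlled, so no containment or size argument is available. (ii) Even granting a local gain, you have no monovariant: the robber may slip back, be pinched, flee, and recreate the tight configuration indefinitely, and you explicitly concede you cannot rule this out. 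Since that termination argument is precisely the content of the conjecture --- the counting step being a one-line perturbation of the proof of Theorem~\ref{MAINTHEOREM} --- the proposal establishes nothing beyond what the paper already proves with $t-3$ cops. The identification of the tight $(t-1)$-hole escape configuration is a reasonable first observation and plausibly a useful lemma, but the statement remains unproved.
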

  
 The first case, namely that $2$ cops can capture a robber in a graph without holes of length at least $6$ looks particularly interesting.

  \begin{conjecture}
   Let $G$ be a graph without holes of length at least $6$. Then $2$ cops can capture the robber. 
  \end{conjecture}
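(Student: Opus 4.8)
The plan is to retain the Gy\'{a}rf\'{a}s-path sliding skeleton of Theorem~\ref{MAINTHEOREM} and to spend the cop we can no longer afford on \emph{guarding} rather than on widening the window. First I would make precise why the clean argument stalls one cop short. Suppose the two cops sit on consecutive path vertices $v_k,v_{k+1}$, with $v_{k+1}$ the front cop, and the robber sits at a vertex $r\in C_{k+1}$ that is adjacent to $v_{k+1}$. If the robber slips to a vertex $r'$ adjacent to $v_{k-1}$ but to neither cop, then $v_{k-1}-v_k-v_{k+1}-r-r'-v_{k-1}$ is an induced $5$-cycle, and a $C_5$ is \emph{permitted} when $t=6$. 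Thus the forbidden $C_6$ that powers the proof of Theorem~\ref{MAINTHEOREM} degrades to an allowed $C_5$ precisely when the window shrinks from three cops to two, and sealing this pentagonal ``back door'' is the entire difficulty.

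The approach I would pursue combines the skeleton with the isometric-path guarding of Aigner and Fromme \cite{AF}. Their lemma shows that a single cop can guard a shortest path $P$: after finitely many moves that cop occupies the projection of the robber onto $P$, so the robber is caught the instant it steps onto $P$ and is thereafter confined to one component of $G-V(P)$. I would assign cop $B$ to guard such a separating geodesic and let cop $A$ run the sliding chase inside the one component still holding the robber. The aim is for the guard to change the arithmetic of the back-door pentagon: the escape vertex $r'$ lies in $N[P]$, so with $B$ pinned to the robber's projection the robber should be unable to complete the $C_5$ without either stepping onto $P$ (immediate capture) or retreating one step further from the far end of $P$. Because $P$ is isometric, that distance is a bounded, genuinely monotone quantity, which is exactly what the purely dynamic sliding argument lacks.

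To produce a separating geodesic in the first place I would simplify the graph using clique cutsets, which are known not to increase the cop number (a cop standing on a clique dominates it and so can seal it off); this reduces the conjecture to an ``atom'' with no long hole and no clique cutset, in which a shortest path between a suitable pair of vertices should genuinely disconnect the graph and thereby make the guarding step nonvacuous.

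The step I expect to be the main obstacle is exactly the interaction between the guarding cop and the permitted short holes. Guarding $P$ in the sense of \cite{AF} forbids the robber from \emph{landing on} $P$ but not from sitting \emph{adjacent} to it, and the whole $C_5$ back door lives in $N[P]$; so one must prove a sharper guarding statement tailored to $5$-chordal graphs (graphs with no induced cycle longer than $5$). Equivalently, one needs a progress measure that cannot increase when the robber slides backward through a pentagon --- and the obvious candidates fail, because a single backward escape can move the robber into a strictly \emph{larger} component than the one it left. I therefore expect the crux to be a structural lemma guaranteeing that, along a guarded geodesic, these back-door pentagons can force the cops to retreat only finitely often (in an amortized sense) before the robber is cornered.
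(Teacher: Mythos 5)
This statement is left as an open conjecture in the paper; there is no proof of it to compare against, and your proposal does not close that gap. Your diagnosis of the obstruction is correct and worth having: with only two cops on $v_k,v_{k+1}$, an escape vertex $r'$ adjacent to $v_{k-1}$ but to neither cop closes an induced $C_5$ through $v_{k-1}-v_k-v_{k+1}-r-r'$, which is permitted when only holes of length at least $6$ are excluded, and this is exactly why the sliding argument of Theorem~\ref{MAINTHEOREM} cannot be pushed from $t-3$ to $t-4$ cops without a new idea.

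However, what you have written is a research plan, not a proof, and you say so yourself: the ``sharper guarding statement tailored to $5$-chordal graphs'' and the amortized progress measure that survives backward escapes through pentagons are precisely the missing content, and everything else is scaffolding around that hole. Two further points need care even within the plan. First, the Aigner--Fromme lemma only prevents the robber from landing on the geodesic $P$; as you note, the pentagonal back door lives entirely in $N[P]\setminus V(P)$, so the guard as stated contributes nothing against it, and it is not clear what quantity the guard is supposed to make monotone. Second, the clique-cutset reduction needs a precise statement and proof in this setting: the folklore argument that a cop can seal a clique cutset must be checked to compose with the two-cop strategy on the atoms without costing a third cop, and even granting it, there is no guarantee that an atom of a graph with no hole of length at least $6$ contains a shortest path that separates it, so the guarding step may be vacuous exactly where it is needed. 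As it stands the proposal identifies the right difficulty but does not resolve it.
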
  
  
  Two weaker conjectures are still open.

   \begin{conjecture}\cite{VS}
   Let $G$ be a $P_5$-free graph. Then $2$ cops can capture the robber. 
  \end{conjecture}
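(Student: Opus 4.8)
The plan is to exploit two features of $P_5$-free graphs that are unavailable in the general no-long-hole setting. First, a connected $P_5$-free graph has diameter at most $3$ (if $d(x,y)\ge 4$ then a shortest $x$–$y$ path, being induced, would contain an induced $P_5$). Second, by a theorem of Bacs\'o and Tuza, every connected $P_5$-free graph has a dominating clique or a dominating induced $P_3$. I would use one cop to \emph{control} such a dominating set $D$ and the second cop to corner the robber, the governing idea being that a robber who repeatedly escapes is forced to trace out a long induced path, which $P_5$-freeness forbids. This is the same Gy\'arf\'as-path philosophy as in Theorem~\ref{MAINTHEOREM}, but the extra domination structure should let two cops replace three.

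\medskip
\noindent\textbf{Dominating clique.} Suppose $D$ is a dominating clique. Cop $1$ will \emph{shadow} the robber: at the start of each of its turns, if the robber sits at $r$ then some $d\in D$ dominates $r$, and because $D$ is a clique cop $1$ can move in a single step from its current vertex of $D$ to $d$, thereby threatening $r$. To survive, the robber must move to a neighbour $r'$ with $r'\notin N[d]$ (and avoiding cop $2$); such an $r'$ is dominated by a \emph{different} clique vertex $d'$, and $d$-$r$-$r'$ is an induced $P_3$ with $d,d'\in D$. The heart of the argument is the claim that the successive positions $r_0,r_1,r_2,\dots$ forced on the robber cannot, for long, avoid creating an induced $P_5$ together with a clique vertex. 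I would make this precise by tracking the induced path the robber builds and showing that after a bounded number of escapes either a $P_5$ appears or the escape set $N(r)\setminus N[d]$ becomes small enough to be covered by a single vertex. Stationing cop $2$ on a vertex dominating the current escape set then removes every safe move and captures the robber.

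\medskip
\noindent\textbf{Dominating $P_3$.} If instead $D=\{a,b,c\}$ induces a $P_3$ (hence a shortest $a$–$c$ path), cop $1$ guards $D$ by the Aigner--Fromme shortest-path technique \cite{AF}: after finitely many moves the robber can never again occupy $a$, $b$, or $c$. Since $D$ dominates, every robber vertex is thereafter adjacent to a guarded vertex, which sharply limits the robber's movement. I would argue that the region available to the robber is now dominated by a clique, or is a strictly smaller $P_5$-free instance, reducing this case to the previous one or to an induction on $|V(G)|$, with cop $2$ delivering the capture.

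\medskip
\noindent\textbf{Main obstacle.} The delicate point is the monovariant. It is \emph{not} literally true that an escaping robber traces a single induced path, because the robber may double back and oscillate among a few vertices, so the Gy\'arf\'as-path intuition must be replaced by a genuine potential that strictly decreases under optimal play. Concretely, one must define precisely the set of vertices the robber can safely occupy given the two cops' positions and show that, under correct coordination, this set shrinks—while ensuring the cops never obstruct one another near $D$. I expect the clique case to yield to a careful case analysis of the short induced paths rooted at $D$, and the $P_3$-domination case, where controlling $D$ need not leave the robber's residual territory clique-dominated, to be the real difficulty and the place where a clean inductive formulation is most needed.
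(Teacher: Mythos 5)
First, note that the paper does not prove this statement: it is listed explicitly as one of the ``two weaker conjectures [that] are still open,'' so there is no proof of record to compare yours against. What you have written is a strategy outline rather than a proof, and by your own admission the central step is missing. Concretely, in the dominating-clique case the entire argument rests on the claim that a robber who repeatedly escapes the shadowing cop must eventually create an induced $P_5$ or be confined to an escape set coverable by one vertex; this is asserted, not proved. The shadowing loop by itself never terminates: already in $C_4$ (which is $P_5$-free and has a dominating edge) the robber can oscillate forever between the two vertices outside the dominating clique, each time moving to a neighbour outside the closed neighbourhood of the shadowing cop, and no induced $P_5$ ever appears. Capture there requires the second cop, and it is precisely the coordination rule for the second cop --- what it computes, why the resulting potential strictly decreases, and why the two cops never obstruct each other --- that is absent. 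The Gy\'{a}rf\'{a}s-path monovariant of Theorem~\ref{MAINTHEOREM} works because the robber is trapped in a strictly shrinking nested sequence of connected sets; you correctly observe that an oscillating robber destroys the ``single induced path'' picture, but you do not supply a replacement.

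The dominating-$P_3$ case has an additional structural gap. After you commit cop $1$ to guarding the dominating path by the Aigner--Fromme technique, that cop is permanently occupied, so the proposed induction on $|V(G)|$ would have to catch the robber in the residual territory with a \emph{single} cop; nothing guarantees that territory is cop-win, nor that it is dominated by a clique, nor that the Bacs\'{o}--Tuza theorem reapplies usefully to it. So both branches bottom out at the same unproven assertion. The ingredients you cite (diameter at most $3$, domination by a clique or a $P_3$) are correct and are natural tools for this problem, but as written the proposal does not constitute a proof of the conjecture, which the paper itself leaves open.
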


   \begin{conjecture}\cite{ST}
   Let $G$ be a $2K_2$-free graph. Then $2$ cops can capture the robber.
  \end{conjecture}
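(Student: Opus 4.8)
The plan is to begin by collecting the structural consequences of $2K_2$-freeness and then to show that the \emph{only} obstruction to saving a cop is the induced $5$-cycle. Since an induced $P_5$ contains an induced $2K_2$ (its two end edges, with no chord between them), every $2K_2$-free graph is $P_5$-free; hence no geodesic has more than three edges and $G$ has diameter at most $3$. Likewise every cycle $C_k$ with $k\ge 6$ contains an induced $2K_2$ (two edges separated by at least one vertex on each side), so the only holes of $G$ have length $4$ or $5$. In particular $G$ has no hole of length at least $6$, and Theorem~\ref{MAINTHEOREM} with $t=6$ already gives three cops. The whole task is to remove one of them, and I expect the analysis to pin the obstruction precisely on induced $5$-cycles.

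To locate that obstruction, I would replay the sliding strategy of Theorem~\ref{MAINTHEOREM} with only two cops. Tracking the computation in that proof, a robber escape from the region guarded by $c$ sliding cops forces an induced cycle of length at least $c+3$. With $c=2$ this is an induced cycle of length at least $5$; since $G$ has no hole of length at least $6$, \emph{every} escape must go through an induced $C_5$, and inspecting the extremal case shows the escape vertex $r'$ is adjacent to the path vertex exactly one step behind the trailing cop. A third cop parked on that vertex would capture $r'$, which is exactly why three cops suffice. So with two cops the entire difficulty is concentrated in these $C_5$-windows.

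The remaining step is to neutralize induced $C_5$'s with genuine cooperation between the two cops. Chordal pieces are easy: a single cop wins on any chordal graph (the case $t=4$ of Theorem~\ref{MAINTHEOREM}), so if the robber can be driven into a chordal region, one cop can pin its component by sitting on a vertex $v$ — the robber may never step into $N(v)$ without being caught — while the other cop clears the residual $G-N[v]$. The hard regions are the dense ones built from $C_4$'s and $C_5$'s, the extremal example being a blow-up $C_5[\overline{K_m}]$, where \emph{every} single-vertex confinement leaves a non-cop-win remainder such as an induced $K_{m,m}$. For such regions the right idea is \emph{flanking}: place the two cops on two neighborhoods that sandwich the robber's current blob around the $5$-cycle, so that whichever way the robber moves along the cycle it runs into a cop, and advance this flank around the cycle until the robber is trapped. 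To carry this out in an arbitrary $2K_2$-free graph I would prove a structural lemma — exploiting that the complement is induced-$C_4$-free — showing that every induced $C_5$ attaches to the rest of $G$ in a nested, chain-graph fashion, so that the flanking neighborhoods always exist and can be advanced.

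The main obstacle is exactly this last lemma. The examples $C_5[\overline{K_m}]$ and $K_{m,m}$ show that a fixed single-vertex confinement need not leave a cop-win residual, so a proof cannot avoid a cooperative two-cop argument, and $C_5$ is genuinely the hard case: it is the shortest hole that the two-cop sliding strategy cannot close. The crux is therefore to use $2K_2$-freeness more heavily than the three-cop bound does, controlling how induced $5$-cycles sit inside the graph and proving they are always flankable. This is the part I expect to require the real work, and it is where a clean proof of the conjecture currently seems to be missing.
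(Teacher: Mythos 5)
This statement is not proved in the paper at all: it is listed explicitly under ``Two weaker conjectures are still open'' and is attributed to \cite{ST} as an open problem, so there is no proof of record to compare yours against. Your preliminary observations are correct and worth having: a $2K_2$-free graph is $P_5$-free (the two end edges of an induced $P_5$ form an induced $2K_2$) and contains no hole of length at least $6$, so Theorem~\ref{MAINTHEOREM} with $t=6$ gives three cops; your reading of the sliding strategy with two cops is also accurate, in that an escape forces a hole of length at least $5$, hence exactly an induced $C_5$ with $r'$ adjacent to the vertex one step behind the trailing cop; and your examples $K_{m,m}$ and $C_5[\overline{K_m}]$ correctly show that a single stationary cop cannot in general leave a cop-win residual graph.

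However, everything you have written up to that point only re-derives the known three-cop bound and explains why the naive two-cop adaptations fail. The entire content of the conjecture is concentrated in the ``flanking lemma'' you state but do not prove --- that every induced $C_5$ attaches to the rest of a $2K_2$-free graph in a nested, chain-graph fashion so that two cops can always establish and advance a flank. No argument is given for why such flanking neighborhoods exist, why the robber cannot leave the sandwiched region through vertices off the $5$-cycle while the flank advances, or how the strategy terminates; you candidly acknowledge this yourself. As it stands the proposal is a research plan with a correctly identified but unresolved crux, not a proof, and it neither closes the conjecture nor diverges from an argument in the paper, since the paper contains none.
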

  
  It is tempting to ask whether graphs without holes of odd length have bounded cop number. Unfortunately, even triangle-free members of the class, namely bipartite graphs, have unbounded cop number (see \cite{BN}). It is equally tempting (by analogy with recent developments in ``$\chi$-boundedness", see \cite{SS}) to ask what happens when we exclude a tree as induced subgraph. Does it force the cop number to be bounded? Well, it is well known that line graphs (which are claw-free) have unbounded cop number (see \cite{BN}). This immediately leads to a (possibly very hard) problem. 
  
  \begin{problem}
Characterize all sets $\mathcal{F}$ of graphs such that the class of $\mathcal{F}$-free graphs  has bounded cop number. 
  \end{problem}

Finally, we derive an inequality relating the cop number and the Dilworth number of a graph. First we need to give a formal definition of the Dilworth number, first defined in \cite{FH}.  \\

 For a vertex $v$, $N(v)$ will denote the set of neighbors of $v$ and $N[v] := \{v\} \cup N(v)$. A set $U$ of vertices in a graph $G$ is said to be {\it Dilworth} if for any distinct $u,v \in U$,  $N(u) \setminus N[v] \neq \emptyset$ and $N(v) \setminus N[u] \neq \emptyset$. The {\it Dilworth number} of a graph $G$, denoted $D(G)$, is the size of a largest Dilworth set. Graphs with Dilworth number $1$ are precisely the threshold graphs \cite{CH}. A full chapter is devoted to the Dilworth number in \cite{MP}, a comprehensive treatise on threshold graphs.
 
 \begin{theorem}
 For a graph $G$ with $D(G) \geq 3$, $c(G) \leq D(G) - 2.$
 \end{theorem}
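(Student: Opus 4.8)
The plan is to derive the inequality from Theorem~\ref{MAINTHEOREM} by showing that a graph of small Dilworth number cannot contain a long hole, and then to invoke the main theorem with $t = D(G)+1$. The engine is a lemma I would establish first: \emph{if $G$ contains a hole of length $\ell \ge 5$, then $D(G) \ge \ell$.} In fact I would prove the stronger statement that the vertex set of such a hole is itself a Dilworth set, and the number $t-3 = (\ell{-}1)-2$ of cops in Theorem~\ref{MAINTHEOREM} is exactly what makes the arithmetic line up.

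To prove the lemma, let the hole have vertices $c_0, c_1, \ldots, c_{\ell-1}$ in cyclic order (indices modulo $\ell$) and fix distinct $c_i, c_j$. The two hole-neighbors of $c_i$ are $c_{i-1}$ and $c_{i+1}$, both lying in $N(c_i)$. Since the hole is chordless, a hole vertex $c_{i+1}$ belongs to $N[c_j]$ only when $c_j$ equals $c_{i+1}$ or one of its two hole-neighbors, i.e. only when $j \in \{i, i+1, i+2\}$; likewise $c_{i-1} \in N[c_j]$ only when $j \in \{i-2, i-1, i\}$. For $\ell \ge 5$ these two index sets meet only in $\{i\}$, so for every $j \ne i$ at least one of $c_{i-1}, c_{i+1}$ lies outside $N[c_j]$, giving $N(c_i) \setminus N[c_j] \ne \emptyset$; exchanging $i$ and $j$ gives the symmetric statement. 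Hence every pair $\{c_i, c_j\}$ is a Dilworth pair, and $D(G) \ge \ell$. Granting the lemma, let $D(G) = d \ge 3$ and set $t = d+1 \ge 4$. Every hole of length $\ge 5$ has length at most $d$, so the only hole of length $\ge t$ that $G$ could contain is a $4$-cycle, and that can happen only when $t = 4$, i.e. $d = 3$. Outside this single configuration $G$ has no hole of length at least $t$, and Theorem~\ref{MAINTHEOREM} yields $c(G) \le t - 3 = d - 2$, as desired.

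The step I expect to be the main obstacle is precisely the boundary configuration left open above: $D(G) = 3$ with an induced $4$-cycle as a longest hole. The cycle-neighbor argument driving the lemma collapses at $\ell = 4$, because for antipodal vertices $c_i$ and $c_{i+2}$ both hole-neighbors $c_{i-1}, c_{i+1}$ lie in $N[c_{i+2}]$; accordingly a $4$-cycle need not be a Dilworth set, and indeed $D(C_4) = 2$. For such a graph the hole reduction only delivers the weaker bound $c(G) \le 5 - 3 = 2$, one short of the target $c(G) \le 1$, so this case cannot be handled by forbidding long holes alone. Settling it — if it can be settled at this value of $d$ — seems to require genuinely different input: one would argue directly that a graph whose only holes are $4$-cycles and whose Dilworth number is $3$ is cop-win, presumably by extracting a dismantling order (a sequence of corners, i.e. vertices $v$ with $N[v] \subseteq N[u]$ for some $u$) from the width-$3$ structure of the vicinal preorder, which by Dilworth's theorem decomposes into three chains. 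I expect this interplay between closed-neighborhood domination and the chain decomposition, rather than the clean hole argument, to carry the real weight in the boundary case, and I would treat it as the crux of the proof.
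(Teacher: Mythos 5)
For $D(G)\ge 4$ your argument is complete and is essentially the paper's: both deduce, from the monotonicity of the Dilworth number under induced subgraphs together with a computation of $D(C_\ell)$, that $G$ has no hole of length at least $D(G)+1$, and then invoke Theorem~\ref{MAINTHEOREM} with $t=D(G)+1$. Your lemma that the vertex set of a hole of length $\ell\ge 5$ is itself a Dilworth set is correct and is exactly the computation needed. Where you diverge is the boundary: the paper asserts, with no case distinction, that $D(C_k)=k$ for every $k\ge 4$ and concludes outright that $G$ has no hole of length $D(G)+1$. You are right that this fails at $k=4$: antipodal vertices of an induced $C_4$ have equal neighborhoods, so $N(u)\setminus N[v]=\emptyset$ for such a pair and $D(C_4)=2$. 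Consequently the paper's own proof of the case $D(G)=3$ rests on a false statement, and your refusal to wave it through is the correct instinct.

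However, the case $D(G)=3$ that you defer to a hoped-for dismantling argument cannot be closed, because the theorem is false there. Consider the octahedron $K_{2,2,2}$, the complement of a perfect matching on six vertices. Each vertex $v$ has a unique non-neighbour $v'$, and $N(v)=N(v')$ consists of the remaining four vertices; hence $N(v)\setminus N[v']=\emptyset$, a Dilworth set contains at most one vertex from each of the three antipodal pairs, and any transversal of the pairs is easily checked to be a Dilworth set (for $u,v$ in different pairs, $v'\in N(u)\setminus N[v]$). Thus $D(K_{2,2,2})=3$. On the other hand $N[u]$ omits exactly $u'$, so $N[u]\subseteq N[v]$ forces $u'=v'$, i.e.\ $u=v$: the graph has no corner, is not dismantlable, and $c(K_{2,2,2})=2>1=D(K_{2,2,2})-2$. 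So no argument — dismantling-order or otherwise — can settle your boundary configuration; the statement should be read as $c(G)\le D(G)-2$ for $D(G)\ge 4$, while for $D(G)=3$ the best one gets by this route is $c(G)\le 2$ (apply Theorem~\ref{MAINTHEOREM} with $t=5$, since a hole of length at least $5$ would force $D(G)\ge 5$), and the octahedron shows that this weaker bound is sharp. Your suspicion that the $D(G)=3$ case ``cannot be handled by forbidding long holes alone'' was exactly right, but for a stronger reason than you anticipated: it is not a gap to be filled but a counterexample to be acknowledged.
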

 
 \begin{proof}
 Note that, for $ k \geq 4$, the Dilworth number of the cycle graph $C_k$ is $k$. Also, the Dilworth number is monotone under taking induced subgraphs i.e., the Dilworth number of an induced subgraph is at most the Dilworth number of the parent graph. Hence a graph $G$ cannot contain a hole of length $D(G)+1$. The desired inequality follows immediately from theorem \ref{MAINTHEOREM}. 
 \end{proof}
 
 It is very unlikely that the above inequality is tight, especially for larger values of $D(G)$. (It is tight for $C_4$.) It might be interesting to characterize graphs $G$ with $c(G) = D(G)-2$. 

\begin{acknowledgements}
The proof in this article was presented at the Barbados Graph Theory Workshop 2019 in Bellairs Research Institute in Holetown, Barbados, and the feedback from some of the participants was very useful in the preparation of this article. In particular, I would like to thank Paul Seymour, Sophie Spirkl, and Cemil Dibek. 
\end{acknowledgements}

\end{document}